\numberwithin{equation}{section}
\newtheorem{theorem}{Theorem}[section]
\newtheorem{corollary}[theorem]{Corollary}
\theoremstyle{definition}
\theoremstyle{plain}
\newcommand{\NN}{\mathbb{N}}
\newcommand{\RR}{\mathbb{R}}
\renewcommand{\phi}{\varphi}
\begin{document}

\center{\bf\LARGE Tweaking Ramanujan's Approximation of $n!$}
\bigskip

\center{ \bf\Large by}

\bigskip

\center{\bf\Large SIDNEY A. MORRIS}

\bigskip

\center{School of Engineering, Information Technology and Physical Sciences,} \center{Federation University Australia,} 
\center{PO Box 663, Ballarat, Victoria, 3353,  Australia \& }
\center{Department of Mathematics and Statistics,}
\center{  La~Trobe University, Melbourne, Victoria, 3086, Australia}
\center{\LARGE morris.sidney@ gmail.com}
\center{\Large ORCID: 0000-0002-0361-576X}

\justify

\bigskip \bigskip

{\bf Funding:} The author received no funding for this research.

\noindent {\bf Conflicts of interest:}  There are no conflicts of interest.

\medskip

\noindent\Large {\bf Keywords:}  n!, gamma function, approximation, asymptotic, Stirling formula, Ramanujan.

\smallskip

\noindent\Large {\bf 2010 Mathematics Subject Classification:} Primary 33B15; Secondary 41A25

\bigskip\bigskip
\noindent{\bf\Large Abstract:}\Large\ About 1730 James Stirling, building on the work of Abraham de Moivre, published what is known as Stirling's approximation of $n!$. He gave a good formula which is asymptotic to $n!$. Since  then hundreds of papers have given alternative proofs of his result and improved upon it, including notably by Burside, Gosper,  and Mortici. However Srinivasa Ramanujan gave a remarkably better asymptotic formula. Hirschhorn and Villarino gave a nice proof of Ramanujan's result and an error estimate for the approximation. In recent years there have been several  improvements of Stirling's formula including  by Nemes,  Windschitl, and Chen.  Here it is shown (i)  how all these asymptotic results can be easily verified; (ii) how Hirschhorn and Villarino's argument allows a tweaking of Ramanujan's result to give a better approximation; (iii) that a new asymptotic formula can be obtained by further tweaking of Ramanujan's result;  (iv) that Chen's asymptotic formula is better than the others mentioned here, and the new asymptotic formula is comparable with Chen's.

\newpage

\title{Tweaking Ramanujan's Approximation of n!}

\author{Sidney A. Morris}

\address{School of Engineering, Information Technology and Physical Sciences, Federation University Australia, PO Box 663, Ballarat, Victoria, 3353,  Australia \& Department of Mathematics and Statistics,  La~Trobe University, Melbourne, Victoria, 3086, Australia}
\email{morris.sidney@gmail.com}
\keywords{n!, gamma function, approximation, asymptotic, Stirling formula, Ramanujan}
\subjclass[2010]{Primary 33B15; Secondary 41A25}

\begin{abstract}
In 1730 James Stirling, building on the work of Abraham de Moivre, published what is known as Stirling's approximation of $n!$. He gave a good formula which is asymptotic to $n!$. Since  then hundreds of papers have given alternative proofs of his result and improved upon it, including notably by Burside, Gosper,  and Mortici. However Srinivasa Ramanujan gave a remarkably better asymptotic formula. Hirschhorn and Villarino gave a nice proof of Ramanujan's result and an error estimate for the approximation. In recent years there have been several  improvements of Stirling's formula including  by Nemes,  Windschitl, and Chen.  Here it is shown (i)  how all these asymptotic results can be easily verified; (ii) how Hirschhorn and Villarino's argument allows a tweaking of Ramanujan's result to give a better approximation; (iii) that a new asymptotic formula can be obtained by further tweaking of Ramanujan's result;  (iv) that Chen's asymptotic formula is better than the others mentioned here, and the new asymptotic formula is comparable with Chen's.
\end{abstract}

\maketitle

\section{Introduction}
About 1730 James Stirling, building on the work of Abraham de Moivre, published what is known as Stirling's approximation of $n!$.  In fact,  Stirling \cite{Tweddle} proved that $n!\sim\sqrt{2\pi n} \left(\dfrac{n}{e}\right)^n$; that is, $n!$ is asymptotic to  $\sqrt{2\pi n} \left(\dfrac{n}{e}\right)^n$. De Moivre had been  considering a gambling problem and needed to approximate $\binom{2n}{n}$ for large $n$. The Stirling approximation gave a very satisfactory solution to this problem.

The problem of extending the factorial from the positive integers to a wider class of numbers  was first investigated by Daniell Bernoulli and Christian Goldbach in the 1720s. In  1729 Leonhard Euler  succeeded and  in 1730 he proved that  for $z$ any complex number with positive real part,  $\Gamma(z)= \int\limits_0^\infty t^{z-1}e^t\,dt$, where  $\Gamma(n)=(n-1)!$, for any positive integer $n$. The name gamma function is due to Adrien-Marie Legendre.

In 1774 Pierre-Simon Laplace noticed that Stirling's formula for $n!$ has a generalization to the gamma function, namely that for $x$ a positive real number, $\Gamma(x+1)\sim \sqrt{2\pi x} \left(\dfrac{x}{e}\right)^x $. One of the most elementary proofs of Stirling's formula for the gamma function  is by Reinhard Michel \cite{Michel}. 

Most of the proofs in the literature of Stirling's formula and its extensions prove  that they are asymptotic by establishing an error estimate such as  
$$\Gamma(x+1)=\sqrt{2\pi x}\left(\frac{x}{e}\right)^x(1+O\left(x^{-1})\right). $$
In fact most of the effort goes into proving such error estimates.

In this paper we observe that once one knows that Stirling's formula is asymptotic to   $\Gamma(x+1)$, all of the other known asymptotic formulae can be verified trivially without the need to establish any error estimates. 

In 1917 William Burnside \cite{Burnside} published a modest improvement on Stirling's formula, namely  $\Gamma(x+1)\sim \sqrt{2\pi}\left(\dfrac{x+1/2}{e}\right)^{x+1/2}$. How modest an improvement it  is can be ascertained from  Table 1 below. In 1978 Ralph William (Bill) Gosper Jr, \cite{Gosper}, published a significant improvement on  Stirling and Burnside's formulae. It was that  $\Gamma(x+1)\sim \sqrt{\pi}\left(\dfrac{x}{e}\right)^x\sqrt{2x+\dfrac{1}{3}}$. 
In a web post in 2002, Robert H. Windschitl, \cite{Smith}, gave an elegant and good  asymptotic approximation of $n!$, namely  that $\Gamma(x+1)\sim \sqrt{2\pi x} \left(\dfrac{x}{e}\right)^x \left(x \sinh\left(\dfrac{1}{x}\right)\right)^{\frac{x}{2}}$. In  2010 Gerg\H{o} Nemes gave an asymptotic approximation which is almost as good as Windschitl's but better than all the others at that time. It was that $\Gamma(x+1)\sim \sqrt{2\pi x} \left(\dfrac{x}{e}\right)^x\left(1+\dfrac{1}{12x^2-\frac{1}{10}}\right)^x$. An asymptotic formula of a different style, which is much better than Gosper's, was published in 2011 by Cristinel Mortici \cite{Mortici}.  It was $\Gamma(x+1)\sim \sqrt{2\pi\,x}\left(\dfrac{x}{e}+\dfrac{1}{12\,e\,x}\right)^x$.

Pierre-Simon Laplace discovered what is now known as the Stirling series for the gamma function.  \vspace{-15pt}

\begin{align*}\Gamma(x+1)\sim e^{-x}x^{x+\frac{1}{2}}\sqrt{2\pi}\bigg(1 +& \frac{1}{12x}+\frac{1}{288x^2}-\frac{139}{51840x^3} -\frac{571}{2488320x^4}\\+
&\sum_{n=5}^\infty \frac{a_n}{b_nx^{n}}\bigg),\end{align*}
where the  real numbers $a_n$ and $b_n$ are explicitly calculated  in \cite{Nemes1}.
As stated in \cite{Namias}, ``the performance deteriorates as the number of terms is increased beyond a certain value''.  In Table 2 below we show how using up to the term $x^{-4}$ in this divergent  series compares with the other approximations.

A major advance in producing an asymptotic formula for $n!$ was made  by the extraordinary  Indian mathematician Srinivasa Ramanujan   (1887--1920) in the last year of his life. Ramanujan's claim, recorded in \cite[p.\,339]{Ramanujan}, was  that  
$$\Gamma(x+1)=\sqrt{\pi}\left(\frac{x}{e}\right)^x\left(8x^3+4x^2+x+\frac{\theta_x}{30}\right)^{\frac{1}{6}},$$
where $\theta_x\to 1$ as $x\to \infty$ and $\dfrac{3}{10}<\theta_x<1$ and he gave numerical evidence for  his claim.  

Ramanujan's approximation  is substantially better than all those which were published in the subsequent 80 years.  For example,  when $n=$ 1 million, the percentage error of Ramanujan's approximation is one million million times better than Gosper's.  

In 2013 Michael Hirschhorn and  Mark B. Villarino \cite{Hirschhorn} proved the correctness of Ramanujan's claim above  for positive integers. They showed that Ramanujan's $\theta_n$ satisfies for each positive integer $n$:
$$1-\frac{11}{8n}+\frac{79}{112n^2}<\theta_n<1-\frac{11}{8n}+\frac{79}{112n^2} +\frac{20}{33n^3}.$$

\noindent Although they did not explicitly say it, it is clear from their work that 
 $\Gamma(x+1)\sim \sqrt{\pi}\left(\dfrac{x}{e}\right)^x \left(8x^3+4x^2+x+\dfrac{ 1-\frac{11}{8x}+\frac{79}{112x^2}}{30}\right)^{\frac{1}{6}}$, at least for positive integers. This  approximation, as can be seen in Table 3, is better than all that preceded it. Indeed for $n=1$ million, it has a percentage error at least one million times better than each one.  

In 2016 Chao-Ping Chen \cite{Chen} produced an asymptotic approximation which for $n=1$ million has a percentage error one million times better than that of Hirschhorn and Villarino. His asymptotic approximation is 
$$\Gamma(x+1)\sim \sqrt{2\pi x} \left(\dfrac{x}{e}\right)^x \left(1+ \dfrac{1}{12x^3+\frac{24}{7}x-\frac{1}{2}}\right)^{x^2+ \frac{53}{210}}.$$

A more detailed analysis of   Hirschhorn and Villarino's improvement on that of Ramanujan, suggests a tweaking of their approximation. That tweaking produces an approximation which is stated in Corollary \ref{2.3} and is comparable to Chen's for $n=1$ to $n=10,000$ and much better than Chen's for $n=1$ million, as is evidenced in Table 3. 

Let me make, with some hesitation, a \emph{controversial} remark. Chen points out that 
Burnside's approximation involves an error of order $O(n^{-1})$, Ramunajan's approximation involves an error of $O(n^{-4})$, Nemes and Windschitl's approximations involves an error of $O(n^{-5}$), and his own approximation involves an error of order $O(n^{-7})$. But in my opinion, these statements are not very informative not only because all the approximations are asymptotic to $n!$, but also because of the following extreme example: 
$$\sqrt{2\pi x} \left(\dfrac{x}{e}\right)^x \left(1+ \dfrac{1}{12x^3+\frac{24}{7}x-\frac{1}{2}}\right)^{x^2+ \frac{53}{210}}\bigg(1 +\frac{10^{100}}{n^8}\bigg)\sim n!$$
and has an error of the order of $O(n^{-7})$ but is an absurdly bad approximation even for $n=1$ million. The order estimate can be used to compare approximations for ``very large'' $n$, but does not tell us how large is ``very large''.

\bigskip

\section{The Approximations of $\Gamma(x+1)$}

As suggested in $\S$1, once we know Stirling's asymptotic formula for $\Gamma(x+1)$, all of the others follow trivially. This fact is captured in Theorem \ref{2.1} .

\begin{theorem}\label{2.1} Let $f$ be a function from a subset $(a,\infty)$ to $\RR$, where $a\in \RR, a>0$. If $\lim\limits_{x\to \infty} f(x)=1$, then $\Gamma(x+1)\sim \sqrt{2\pi \,x}\left(\frac{x}{e}\right)^x.f(x)$. \end{theorem}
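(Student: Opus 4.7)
The plan is to deduce Theorem \ref{2.1} directly from Stirling's classical asymptotic formula, which is taken as given in the Introduction, together with the elementary arithmetic of limits. The whole content is that ``$\sim$'' is preserved under multiplication by a function that tends to $1$.

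First I would write the asymptotic relation Stirling established in the form
\[
\lim_{x\to\infty}\frac{\Gamma(x+1)}{\sqrt{2\pi\,x}\,\left(\dfrac{x}{e}\right)^{x}}=1.
\]
Next, by hypothesis, $\lim_{x\to\infty} f(x)=1$, and in particular $f(x)\ne 0$ for all sufficiently large $x$, so the quotient below is defined on some interval $(b,\infty)$. Then I would split the quotient
\[
\frac{\Gamma(x+1)}{\sqrt{2\pi\,x}\,\left(\dfrac{x}{e}\right)^{x}\,f(x)}
=\frac{\Gamma(x+1)}{\sqrt{2\pi\,x}\,\left(\dfrac{x}{e}\right)^{x}}\cdot\frac{1}{f(x)}
\]
and apply the product rule for limits (valid since both factors have finite nonzero limits). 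Each factor tends to $1$, so the product tends to $1$, which is precisely the statement that $\Gamma(x+1)\sim \sqrt{2\pi\,x}\,(x/e)^{x}\,f(x)$.

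There is no real obstacle: the only point that requires a word of care is noting that $f(x)$ is eventually nonzero, so that dividing by $f(x)$ is legitimate on a tail of $(a,\infty)$; this is immediate from $f(x)\to 1$. Everything else is the definition of $\sim$ together with the standard limit laws, so the proof is a few lines.
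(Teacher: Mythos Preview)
Your proposal is correct and takes essentially the same approach as the paper: the paper simply says ``This follows immediately from the Stirling asymptotic approximation,'' and you have written out the one-line argument in full, including the minor point that $f$ is eventually nonzero.
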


\begin{proof} This follows immediately from the Stirling asymptotic approximation, namely that  $\Gamma(x+1)\sim \sqrt{2\pi \,x}\left(\frac{x}{e}\right)^x$.  \end{proof}

As an immediate corollary of Theorem \ref{2.1} we obtain that all of the other mentioned approximations are asymptotic to $\Gamma(x+1)$. Some of these were proved by the authors only for $x$ a positive integer.

\begin{corollary}\label{2.2} For  $x$ a positive real number:
\begin{itemize}
\item[(i)] {\rm Burnside \cite{Burnside}}:\quad
 $\Gamma(x+1)\sim \sqrt{2\pi}\left(\dfrac{x+1/2}{e}\right)^{x+1/2}$\,; 
 \item[(ii)] {\rm Gosper \cite{Gosper}}: $\Gamma(x+1)\sim \sqrt{\pi}\left(\dfrac{x}{e}\right)^x\sqrt{2x+\dfrac{1}{3}}$\,;
 \item[(iii)] {\rm Mortici \cite{Mortici}}:\quad $\Gamma(x+1)\sim \sqrt{2\pi\,x}\left(\dfrac{x}{e}+\dfrac{1}{12\,e\,x}\right)^x$\,; 
 \item[(iv)] {\rm Ramanujan \cite{Ramanujan}}: $\Gamma(x+1)\sim \sqrt{\pi}\left(\dfrac{x}{e}\right)^x\left(8x^3+4x^2+x+\dfrac{1}{30}\right)^{\frac{1}{6}}$\,;
 \item[(v)] {\rm Laplace $(n)$}:\quad Fix $n\in \NN$. For $a_i,b_i\in \NN$,  $$\quad\quad \Gamma(x+1)\sim e^{-x}x^{x+\frac{1}{2}}\sqrt{2\pi}\bigg(1 + \frac{1}{12x}+\frac{1}{288x^2}+
\sum_{i=3}^n \frac{a_i}{b_i x^{i}}\bigg);$$
 \item[(vi)]  {\rm Nemes:}\quad $\Gamma(x+1)\sim \sqrt{2\pi x} \left(\dfrac{x}{e}\right)^x\left(1+\dfrac{1}{12x^2-\frac{1}{10}}\right)^x$.
 \item[(vii)] {\rm Windschitl \cite{Smith}:} $\Gamma(x+1)\sim \sqrt{2\pi x} \left(\frac{x}{e}\right)^x \left(x \sinh\left(\frac{1}{x}\right)\right)^{\frac{x}{2}}$.
 \item[(viii)] {\rm Hirschhorn \& Villarino \cite{Hirschhorn} }:\newline $\Gamma(x+1)\sim \sqrt{\pi}\left(\dfrac{x}{e}\right)^x \left(8x^3+4x^2+x+\dfrac{ 1-\frac{11}{8x}+\frac{79}{112x^2}}{30}\right)^{\frac{1}{6}}$\,.
 \item[(ix)] {\rm Chen \cite{Chen}:} $\Gamma(x+1)\sim \sqrt{2\pi x} \left(\dfrac{x}{e}\right)^x \left(1+ \dfrac{1}{12x^3+\frac{24}{7}x-\frac{1}{2}}\right)^{x^2+ \frac{53}{210}}$.
\end{itemize}
\end{corollary}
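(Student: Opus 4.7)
The plan is to apply Theorem \ref{2.1} nine times. For each listed formula $A_i(x)$, I would set
\[ f_i(x) := \frac{A_i(x)}{\sqrt{2\pi x}\,(x/e)^x} \]
and verify $\lim_{x\to\infty} f_i(x) = 1$; Theorem \ref{2.1} then immediately gives $A_i(x) \sim \Gamma(x+1)$. The whole proof is therefore a sequence of nine elementary limit calculations.

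Two cases are essentially instantaneous. For (ii) Gosper one has $f(x) = \sqrt{(2x+1/3)/(2x)} = \sqrt{1 + 1/(6x)} \to 1$; for (v) Laplace, $f(x)$ is manifestly $1$ plus a finite sum of negative powers of $x$, so the limit is $1$ termwise.

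The remaining seven cases fall into one of two templates. The first is $f(x) = (1 + g(x))^{h(x)}$ with $g(x) \to 0$ and $h(x) g(x) \to 0$; I would handle it by taking logarithms and using $\ln(1 + g) = g + O(g^2)$. This covers Mortici (iii) with $g = 1/(12x^2)$ and $h = x$, Nemes (vi) with $g = 1/(12x^2 - 1/10)$, Chen (ix) with $g = 1/(12x^3 + 24x/7 - 1/2)$ and $h = x^2 + 53/210$, and, after the expansion $x\sinh(1/x) = 1 + 1/(6x^2) + O(1/x^4)$, Windschitl (vii). In every instance $h(x) g(x)$ reduces to a quantity of order $1/x$. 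Burnside (i) is a mild variant: I would split the ratio as $\sqrt{(x+1/2)/(xe)}\cdot (1 + 1/(2x))^x$, whose two factors tend to $1/\sqrt{e}$ and $\sqrt{e}$ respectively, giving product $1$. The second template handles the Ramanujan-style formulae (iv) and (viii): factor the polynomial under the sixth root as $8x^3\cdot(1 + O(1/x))$, use $8^{1/6} = \sqrt{2}$ so that $(8x^3)^{1/6} = \sqrt{2x}$, and combine with the $\sqrt{\pi}$ prefactor to recover $\sqrt{2\pi x}$; the residual factor $(1 + O(1/x))^{1/6}$ plainly tends to $1$.

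The only even mildly delicate point is the bookkeeping in the Ramanujan-style cases, where one must correctly align $\sqrt{\pi}$ with the constant $8^{1/6} = \sqrt{2}$ extracted from the sixth root of a cubic. Nothing beyond elementary limits, the standard logarithm estimate, and the Taylor expansion of $\sinh$ at $0$ is required, which is precisely the message the paper wishes to make: once Stirling's asymptotic is granted, verifying that each of the competing formulae is asymptotic to $\Gamma(x+1)$ is a short exercise, bypassing entirely the explicit error estimates that cost the original authors considerable effort.
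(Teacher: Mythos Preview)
Your proposal is correct and follows exactly the paper's approach: for each formula define $f$ as the ratio to Stirling's expression and observe $f(x)\to 1$, then invoke Theorem~\ref{2.1}. The paper merely lists the nine functions $f$ without spelling out the limit verifications, so your added detail (the $\ln(1+g)$ template, the $8^{1/6}=\sqrt{2}$ bookkeeping, the $\sinh$ expansion) is a faithful elaboration rather than a different route.
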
 
\begin{proof} In each case it is sufficient to determine the function $f$ in Theorem \ref{2.1}and observe that $\lim\limits_{x\to \infty}f(x)=1$. 
\begin{itemize}
\item[(i)] Use $f(x)=\left(1+\dfrac{1}{2x}\right)^x\left(\dfrac{1+\dfrac{1}{2x}}{e}\right)^{\frac{1}{2}}$.
\item[(ii)] Use $f(x)= \sqrt{1+\dfrac{1}{6x}}\,  $.
\item[(iii)] Use $f(x)=\left(1+\dfrac{1}{12x^2}\right)^x\,.$
\item[(iv)] Use $f(x)={\left(1+\dfrac{1}{2x}+\dfrac{1}{8x^2}  +\dfrac{1}{240x^3}\right)^{\frac{1}{6}}}\,.$
\item[(v)] Use $f(x)= \left(1+\dfrac{1}{12x}+\dfrac{1}{288x^2}+\sum\limits_{i=3}^n\dfrac{a_i}{b_ix^i}\right)$.\vspace{4pt}
\item[(vi)] Use $f(x)= \left(1+\dfrac{1}{12x^2-\frac{1}{10}}\right)^x$. \vspace{4pt}
\item[(vii)] Use $ f(x)=\left(x \sinh\left(\frac{1}{x}    \right)\right)^{\frac{x}{2}}$.
\item[(viii)]Use $f(x)={\left(1+\dfrac{1}{2x}+\dfrac{1}{8x^2}  +
\dfrac{1-\frac{11}{8x}+\frac{79}{112x^2}}{240x^3}
\right)^{\frac{1}{6}}}\,.$ \vspace{4pt}
\item[(ix)] Use $f(x)=\left(1+ \dfrac{1}{12x^3+\frac{24}{7}x-\frac{1}{2}}\right)^{x^2+ \frac{53}{210}}$.
\end{itemize}
\end{proof}

In fact, at the expense of a little more complication, we can tweak Ramanujan's approximation again to get an even better approximation for large values of $x$, which we refer to in the table below as the SAM approximation. The proof of the corollary uses an obvious modification of the proof of (ix) above. 

\begin{corollary}\label{2.3} For $x$ a positive real number, \newline
$$\Gamma(x+1)\sim \sqrt{\pi}\left(\dfrac{x}{e}\right)^x \left(8x^3+4x^2+x+\dfrac{ 1-\dfrac{11}{8x}+\dfrac{79}{112x^2} +\dfrac{A}{x^3}  }{30}\right)^{\frac{1}{6}},$$ where $A=\dfrac{380279456577}{722091376690}$. \qed
\end{corollary}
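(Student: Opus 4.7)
The plan is to reduce the corollary to a direct application of Theorem \ref{2.1} by rewriting the right-hand side as $\sqrt{2\pi x}\,(x/e)^x \cdot f(x)$ for a suitable function $f$ and then checking that $\lim_{x\to\infty} f(x)=1$.

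First I would absorb the leading $\sqrt{\pi}$ into the sixth-root factor using the identity $\sqrt{2x}=(8x^3)^{1/6}$; since $\sqrt{\pi}/\sqrt{2\pi x}=1/(8x^3)^{1/6}$, the factor $8x^3$ can be pulled inside the sixth root and used to divide the cubic-plus-correction. This produces the candidate
$$f(x) = \left(1+\frac{1}{2x}+\frac{1}{8x^2}+\frac{1-\frac{11}{8x}+\frac{79}{112x^2}+\frac{A}{x^3}}{240\,x^3}\right)^{1/6},$$
which is precisely the one-order refinement of the function $f$ exhibited for part (viii) of Corollary \ref{2.2}. The algebra is essentially identical to the verification carried out for (iv) and (viii).

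Next I would verify $f(x)\to 1$ as $x\to\infty$: each of $\tfrac{1}{2x}$ and $\tfrac{1}{8x^2}$ tends to $0$, while the final summand has a bounded numerator (tending to $1$) divided by $240\,x^3$, so it also vanishes. Hence $f(x)\to 1$, and Theorem \ref{2.1} immediately delivers the claimed asymptotic equivalence.

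There is no genuine obstacle in the asymptotic statement itself: as with every item of Corollary \ref{2.2}, the proof reduces to a one-line limit check, and in fact any finite constant $A$ would yield an asymptotic formula. The nontrivial content — the reason this particular value $A=\tfrac{380279456577}{722091376690}$ is singled out — lies in a \emph{separate} optimization extending the Hirschhorn--Villarino analysis of Ramanujan's $\theta_x$ one order further in $1/x$, chosen to minimize the numerical discrepancy. That calibration of $A$ is what makes the resulting SAM approximation competitive with Chen's, but it plays no role in the proof that the formula is asymptotic.
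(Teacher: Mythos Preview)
Your proof is correct and follows essentially the same approach as the paper: the paper states that the proof is ``an obvious modification'' of the argument used in Corollary~\ref{2.2}, and your $f(x)$ is exactly the one-term extension of the function used in part~(viii), with the limit check being immediate. Your added observation that \emph{any} constant $A$ yields an asymptotic formula, and that the particular value is selected by a separate numerical optimization rather than by the asymptotic argument, is accurate and worth making explicit.
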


\section{Numerical Analysis of the Approximations}

The tables in this section were calculated using the WolframAlpha software package. (See \url{https://www.wolframalpha.com/}.) They demonstrate the performance of the asymptotic approximations.

\smallskip
Each of the approximations gets further and further from $n!$ as $n$ tends to infinity. So the quality of the approximations is best judged by considering the percentage error, that is $100\times \dfrac{\text{approximation}-n!}{n!}$. 

In the tables S $=$ Stirling, B $=$ Burnside, G $=$ Gosper, L4 $=$ (Laplace) Stirling series up to $x^{-4}$, M $=$ Mortici, R $=$ Ramanujan, HV $=$ Hirschhorn and Villarino, C  $=$ Chen, and SAM $=$ the author of this paper.

\smallskip
From the  tables it is abundantly clear that Gosper's approximation is a much better approximation than Stirling's, and Mortici's elegant approximation is closer in accuracy to Ramanujan's. Ramanujan's approximation is amazingly good. The tweaking of Ramanujan's approximation using the Hirschhorn-Villarino results significantly improves the approximation. Chen's approximation is better than all that precede it. The SAM approximation obtained by extra tweaking of Ramanujan's approximation produces  an approximation similar to Chen's up to $n=10,000$ and much better for $n=1,000,000$. 

 \begin{table}[h!]   \caption{}
\centering
 \begin{tabular}{|c|c|c|c|c|} 
 \hline 
$n$&$n!$ & S \%error & B \%error&G \%error\\ [0.5ex] 
 \hline
 2 & 2&4.0&1.7&1.3$\times10^{-1}$\\[0.5ex] 
 \hline
5&1.2$\times10^{2}$&1.7&7.6$\times10^{-1}$&2.5$\times10^{-2}$\\[0.5ex] 
 \hline
 10&3.6$\times10^{6}$&8.3$\times10^{-1}$&4.0$\times10^{-1}$ &6.6$\times10^{-3}$\\
  \hline  
  20&   2.4$\times 10^{18}$&4.2$\times10^{-1}$& 2.0$\times10^{-1}$&1.7$\times10^{-3}$ \\[0.5ex] 
  \hline
  50 &   3.0$\times 10^{64}$& 1.7$\times10^{-1}$&8.3$\times10^{-2}$&2.7$\times10^{-4}$\\[0.5ex] 
  \hline
  100 &9.3$\times 10^{157}$  &8.3$\times10^{-1}$&4.1$\times10^{-2}$ &6.9$\times10^{-5}$\\[0.5ex] 
  \hline
   $10^3$ &4.0$\times10^{2567}$&8.3$\times10^{-3}$&4.2$\times10^{-3}$  &6.9$\times10^{-7}$\\ [0.5ex] \hline
   $10^{4}$&2.8$\times10^{35659}$&8.3$\times10^{-4}$&4.2$\times10^{-4}$&6.9$\times10^{-9}$\\ [0.5ex] \hline
   $10^{6}$&8.3$\times10^{5565708}$&8.3$\times10^{-6}$ &  4.2$\times10^{-6}$ &6.9$\times10^{-13}$\\ [0.5ex] \hline
\end{tabular}
\end{table}

\bigskip

 \begin{table}[h!] 
\centering \caption{}
 \begin{tabular}{|c|c|c|c|c|c|} 
 \hline 
$n$&$n!$         & M \%error           &R \% error                &L4 \%error & N \%error\\ [0.5ex] 
 \hline
 2 & 2             &1.0$\times10^{-2}$  &3.3$\times10^{-3}$   &1.4$\times10^{-2}$   &1.7$\times10^{-3}$    \\[0.5ex] 
 \hline
5&1.2$\times10^{2}$&5.7$\times10^{-4}$  &1.2$\times10^{-4}$&3.5$\times10^{-4}$     &2.0$\times10^{-5}$   \\[0.5ex] 
 \hline
 10&3.6$\times10^{6}$ &7.0$\times 10^{-5}$&8.6$\times10^{-6}$&7.8$\times10^{-7}$      &6.5$\times10^{-7}$  \\[0.5ex] 
  \hline  
  20&   2.4$\times 10^{18}$ &8.7$\times10^{-6}$& 5.7$\times10^{-7}$&2.4$\times10^{-8}$        &2.0$\times10^{-8}$  \\[0.5ex] 
  \hline
  50 &   3.0$\times 10^{64}$ & 5.6$\times10^{-7}$&1.5$\times10^{-8}$ &2.5$\times10^{-10}$     &2.1$\times10^{-10}$  \\[0.5ex] 
  \hline
  100 &9.3$\times 10^{157}$  &6.9$\times10^{-8}$& $9.5\times 10^{-10}$&7.8$\times10^{-12}$       &6.5$\times10^{12}$              \\[0.5ex] 
  \hline
   $10^3$&4.0$\times10^{2567}$&6.9$\times10^{-11}$&9.5$\times10^{-14}$&7.8$\times10^{-17}$     &6.5$\times10^{-17}$ \\ [0.5ex] \hline
  $ 10^{4}$&2.8$\times10^{35659}$&6.9$\times10^{-14}$&9.5$\times10^{-18}$&    7.8$\times10^{-22}$ &6.5$\times10^{-22}$  \\ [0.5ex] \hline
   $10^{6}$&8.3$\times10^{5565708}$&6.9$\times10^{-20}$&9.5$\times10^{-26}$&7.8$\times10^{-32}$       & 6.5$\times10^{-32}$\\ [0.5ex] \hline
\end{tabular}
\end{table}


\bigskip
 \begin{table}[h!] \caption{}
\centering
 \begin{tabular}{|c|c|c|c|c|c|} 
 \hline 
$n$&$n!$ &W \% error        & HV \%error           &C \% error                &SAM \%error\\ [0.5ex] 
 \hline
 2 & 2 &1.6$\times10^{-3}$               &1.6$\times10^{-4}$&2.2$\times 10^{-4}$&2.9$\times10^{-4}$\\[0.5ex] 
 \hline
5&1.2$\times10^{2}$ &1.9$\times10^{-5}$      &1.5$\times10^{-6}$&5.0$\times10^{-7}$&6.0$\times10^{-7}$
\\[0.5ex] 
 \hline
 10&3.6$\times10^{6}$ &6.1$\times10^{-7}$     &3.0$\times10^{-8}$&4.1$\times10^{-9}$&4.9$\times10^{-9}$
 \\[0.5ex] 
  \hline  
  20&   2.4$\times 10^{18}$ &1.9$\times10^{-8}$       &5.2$\times10^{-10}$&3.2$\times10^{-11}$&3.8$\times10^{-11}$  \\[0.5ex] 
  \hline
  50 &   3.0$\times 10^{64}$  &2.1$\times10^{-10}$        &2.3$\times10^{-12}$&5.3$\times10^{-14}$&6.3$\times10^{-14}$\\[0.5ex] 
  \hline
  100 &9.3$\times 10^{157}$  & 6.2$\times10^{-12}$           &3.6$\times10^{-14}$& 4.2$\times10^{-16}$ &4.9$\times10^{-16}$           \\[0.5ex] 
  \hline
   $10^3$&4.0$\times10^{2567}$&6.2$\times10^{-17}$      &3.7$\times10^{-20}$&4.17$\times10^{-23}$&4.9$\times10^{-23}$\\ [0.5ex] \hline
  $ 10^{4}$&2.8$\times10^{35659}$&6.2$\times10^{-22}$       &3.7$\times10^{-26}$&4.2$\times10^{-30}$&4.9$\times10^{-30}$\\ [0.5ex] \hline
   $10^{6}$&8.3$\times10^{5565708}$& 6.2$\times10^{-32}$       &3.7$\times10^{-38}$&4.2$\times10^{-44}$&1.3$\times10^{-50}$\\ [0.5ex] \hline
\end{tabular}
\end{table}
\FloatBarrier


\end{document}